\newtheorem{theorem}{\bf \large Theorem}[section]
\newtheorem{PROPOSITION}{\bf \large Proposition}[section]
\newtheorem{remark}{\bf \large Remark}[section]
\title{Rigidity of closed minimal hypersurface  in $\mathbb{S}^5$}
\author {{ Pengpeng Cheng,~~~Tongzhu Li,} \\
\small{Department of Mathematics, Beijing Institute of
Technology, Beijing, 100081, China.} \\
\small{ E-mail: ~3120235951@bit.edu.cn,~~litz@bit.edu.cn.}}
\date{}
\begin{document}
\maketitle
\begin{abstract}
Let $M^4\to \mathbb{S}^5$ be a closed immersed minimal hypersurface with constant squared length of
the second fundamental form $S$ in a $5$-dimensional sphere $\mathbb{S}^5$.
In this paper, we prove  that if  $3$-mean curvature $H_3$ and the number $g$ of the distinct principal curvatures are constant, then $M^4$ is an isoparametric hypersurface, and the value of  $S$ can only be $0, 4, 12$. This result supports Chern Conjecture.
\end{abstract}

\medskip\noindent
{\bf 2020 Mathematics Subject Classification:} 53C40, 53C42.
\par\noindent {\bf Key words:}  minimal hypersurface, Chern Conjecture, isoparametric hypersurface, the second fundamental form.

\section{Introduction}
The study of the rigidity of minimal hypersurfaces   is a very interesting problem.  In \cite{sim}, Simons  proved the following well-known Simons integral inequality,
\begin{theorem}(\cite{sim})
Let $M^n$ be a closed minimal submanifold in the unit
sphere $\mathbb{S}^{n+p}$
and $S$ the squared norm of its second fundamental form. Then
$$\int_{M^n}S\Big(S-\frac{n}{2-\frac{1}{p}}\Big)dM\geq 0.$$
In particular, for $S\leq \frac{n}{2-\frac{1}{p}}$,
one has either $S=0$ or $S=\frac{n}{2-\frac{1}{p}}$
identically on M.
\end{theorem}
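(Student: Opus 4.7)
I would follow Simons' original strategy: derive a Weitzenb\"ock-type identity for $\tfrac{1}{2}\Delta S$, bound its algebraic term by a Chern--do~Carmo--Kobayashi inequality, and integrate over the closed manifold. In a local Darboux frame on $M^{n}\subset\mathbb{S}^{n+p}$, let $h^{\alpha}_{ij}$ denote the components of the second fundamental form in the $\alpha$-th normal direction, so that $S=\sum_{\alpha,i,j}(h^{\alpha}_{ij})^{2}$. Applying the Ricci identity to the Hessian of $h^{\alpha}_{ij}$ and using the Gauss--Codazzi equations, the constant sectional curvature of $\mathbb{S}^{n+p}$, and minimality $\sum_{i}h^{\alpha}_{ii}=0$, I would obtain
\[
\tfrac{1}{2}\Delta S \;=\; |\nabla h|^{2}+nS-\sum_{\alpha,\beta}\Big(\sum_{i,j}h^{\alpha}_{ij}h^{\beta}_{ij}\Big)^{2}-\sum_{\alpha,\beta}\big|[A^{\alpha},A^{\beta}]\big|^{2},
\]
where $A^{\alpha}$ is the shape operator in the $\alpha$-th normal direction.

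\textbf{Algebraic inequality and pointwise bound.} I would then appeal to the Chern--do~Carmo--Kobayashi inequality
\[
\sum_{\alpha,\beta}\Big(\sum_{i,j}h^{\alpha}_{ij}h^{\beta}_{ij}\Big)^{2}+\sum_{\alpha,\beta}\big|[A^{\alpha},A^{\beta}]\big|^{2}\;\le\;\Big(2-\frac{1}{p}\Big)S^{2},
\]
a sharp estimate on tuples of symmetric traceless $n\times n$ matrices. Plugging this into the Simons identity yields the pointwise inequality
\[
\tfrac{1}{2}\Delta S\;\ge\; |\nabla h|^{2}+S\Big(n-\Big(2-\frac{1}{p}\Big)S\Big).
\]

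\textbf{Integration, rigidity, and main obstacle.} Since $M^{n}$ is closed, $\int_{M^n}\Delta S\,dM=0$; integrating the pointwise estimate, dropping $|\nabla h|^{2}\ge 0$, and dividing by $2-\tfrac{1}{p}>0$ yields the asserted
\[
\int_{M^n}S\Big(S-\frac{n}{2-\frac{1}{p}}\Big)\,dM\;\ge\;0.
\]
For the second assertion, if $S\le \frac{n}{2-\frac{1}{p}}$ pointwise, then $S\big(S-\frac{n}{2-\frac{1}{p}}\big)\le 0$; combined with the integral inequality, the integrand must vanish identically, so at each point $S=0$ or $S=\frac{n}{2-\frac{1}{p}}$, and continuity together with connectedness of $M^n$ forces one of the two values globally. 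The main obstacle is Step~2: the Chern--do~Carmo--Kobayashi inequality is a sharp algebraic estimate whose proof requires simultaneously controlling the inner product terms $\langle A^{\alpha},A^{\beta}\rangle$ and the commutator terms $[A^{\alpha},A^{\beta}]$. In the hypersurface case $p=1$ the inequality becomes trivial, but for $p\ge 2$ the constant $2-\tfrac{1}{p}$ is optimal, and one proves it by diagonalizing a single shape operator and reducing to a two-matrix problem; this is the technical core of the entire theorem.
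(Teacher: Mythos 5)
The paper does not prove this statement at all---it is quoted as background directly from Simons \cite{sim} (with the equality case later settled in \cite{chern} and \cite{L})---so there is no internal proof to compare against. Your outline is the standard and correct argument from those references: the Simons identity for $\tfrac12\Delta S$, the algebraic bound by $\big(2-\tfrac1p\big)S^2$ (obtainable from $N(AB-BA)\le 2N(A)N(B)$ after diagonalizing the normal-direction Gram matrix $\operatorname{tr}(A^\alpha A^\beta)$), and integration over the closed manifold, with the rigidity conclusion following from the vanishing of the integrand (or, even more directly, from $\nabla h\equiv 0$ forcing $S$ to be constant).
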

Shortly afterwards Chern, do Carmo and
Kobayashi (\cite{chern}) and Lawson (\cite{L}) independently showed that Simons' result is sharp and the equality
is realized by the Clifford minimal hypersurfaces in the unit sphere.
\begin{theorem}(\cite{chern},\cite{L})
Let $M^n$ be a closed minimal hypersurface in $(n+1)$-dimensional sphere $\mathbb{S}^{n+1}$ $(n\geq 2)$, and $S$ the squared norm of the second fundamental
form. Then we have
$$\int_{M}S(n-S)dM\geq0.$$
In particular, if  $0\leq S\leq n$, then $S=0$ and $M$ is totally geodesic, or $S=n$ and $M$ is one of  Clifford minimal hypersurfaces.
\end{theorem}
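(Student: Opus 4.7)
The plan is to deduce both the integral inequality and the rigidity statement from the pointwise Simons identity for a minimal hypersurface in a space form of constant curvature $1$, which in codimension one reads
$$\tfrac{1}{2}\Delta S \;=\; |\nabla A|^2 + S(n-S),$$
where $A$ is the second fundamental form. This formula follows from the standard decomposition $\tfrac{1}{2}\Delta|A|^2 = |\nabla A|^2 + \langle A,\Delta A\rangle$ together with the Ricci and Codazzi identities applied to a minimal hypersurface in $\mathbb{S}^{n+1}$.

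Since $M$ is closed, integrating the Simons identity over $M$ kills the Laplacian term by the divergence theorem and yields
$$\int_M|\nabla A|^2\, dM \;+\; \int_M S(n-S)\, dM \;=\; 0,$$
which gives the integral relation in the statement and coincides with the $p=1$ instance of Theorem 1.1. For the rigidity conclusion, the hypothesis $0\leq S\leq n$ forces $S(n-S)\geq 0$ pointwise; combined with the identity above, both $|\nabla A|^2$ and $S(n-S)$ must vanish identically. Continuity of $S$ on the connected manifold $M$ then forces $S\equiv 0$ or $S\equiv n$.

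The case $S\equiv 0$ gives $A\equiv 0$ and $M$ is a totally geodesic equator $\mathbb{S}^n\subset\mathbb{S}^{n+1}$. The substantive case is $S\equiv n$. Here $\nabla A\equiv 0$ already implies that the principal curvatures are constant and that their eigendistributions are parallel, mutually orthogonal and integrable. To identify $M$ with a Clifford hypersurface $\mathbb{S}^k(\sqrt{k/n})\times \mathbb{S}^{n-k}(\sqrt{(n-k)/n})$ I would proceed in two stages: first, combine $\sum_i\lambda_i=0$ (minimality), $\sum_i\lambda_i^2=n$ and the algebraic consequences of $\nabla A=0$ to reduce to exactly two distinct principal curvatures $\lambda$ and $\mu$ with multiplicities $k$ and $n-k$, then solve $k\lambda+(n-k)\mu=0$ and $k\lambda^2+(n-k)\mu^2=n$ to obtain $\lambda=\sqrt{(n-k)/k}$ and $\mu=-\sqrt{k/(n-k)}$; second, invoke the classical splitting theorem for hypersurfaces with parallel second fundamental form in a space form to realize $M$ globally as the stated extrinsic Riemannian product inside $\mathbb{S}^{n+1}$.

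The main obstacle is this last step: ruling out three or more distinct principal curvatures under the combined constraints $\nabla A=0$, minimality and $S=n$, and then upgrading the resulting local product decomposition of eigen-distributions into a global extrinsic product of round spheres. The algebraic reduction is routine once the classification of parallel-$A$ hypersurfaces is in hand, but the geometric realization of $M$ as a concrete Clifford minimal hypersurface genuinely relies on that classical structural result.
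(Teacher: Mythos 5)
The paper itself offers no proof of this theorem --- it is quoted directly from \cite{chern} and \cite{L} --- and your argument is precisely the classical one from those sources: integrate the Simons identity $\tfrac12\Delta S=|\nabla A|^2+S(n-S)$ over the closed manifold, and in the pinched case use $\nabla A\equiv 0$ together with the splitting of the parallel eigendistributions to reduce to two constant principal curvatures and identify the Clifford hypersurface. One caveat: your integration gives $\int_M S(n-S)\,dM=-\int_M|\nabla A|^2\,dM\le 0$, i.e.\ $\int_M S(S-n)\,dM\ge 0$, which is the correct classical inequality (the $p=1$ case of Theorem 1.1); the displayed inequality in the statement has the sign reversed (a typo in the paper), so your remark that the identity ``gives the integral relation in the statement'' as printed should be amended accordingly.
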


By Gauss equation we have $R=n(n-1)-S$. Based on these, S. S. Chern had insight into the rigidity  and
has proposed the following Chern Conjecture.

{\bf Chern Conjecture}:
{\it Let $x:M^n\to \mathbb{S}^{n+1}$ be a closed  minimal hypersurfaces with constant scalar curvatures $R$ in $(n+1)$-dimensional sphere $\mathbb{S}^{n+1}$($n\geq 2$). Let $\mathbb{A}_R$ be the collection of all the possible values of such scalar curvature $R$, then $\mathbb{A}_R$ is a discrete subset of real numbers.}

The Chern Conjecture remains open, but there are many partial results. Peng and Terng (\cite{peng},\cite{peng1}) made the first effort to solve the Chern Conjecture and confirmed the second gap of $\mathbb{A}_R$. Precisely, they proved that if the scalar curvature $R$ of $M^n$
is a constant, then there exists a positive constant $C(n)$ depending only on $n$ such that if
$n\leq S\leq n+C(n)$, then $S=n$. Later, the pinching constant $C(n)$ was improved to $\frac{n}{3},~n>3$
by Cheng and Yang (\cite{yang2},\cite{yang3}), and to $\frac{3n}{7}$ by Suh and Yang (\cite{yang}), respectively.
In 1993, Chang (\cite{chang2}) solved  Chern Conjecture for $n=3$.
In \cite{de} de Almeida-Brito-Scherfner-Weiss proved that if $M^n (n\geq 4)$ is a closed
minimally immersed hypersurface in $\mathbb{S}^{n+1}$ with constant Gauss-Kronecker curvature and it has three
distinct principal curvatures everywhere, then $M^n$ is an isoparametric hypersurface.  For  $n=4$, Tang-Yang in \cite{tali} proved that, if the scalar curvature $R\geq 0$,  $3$-mean curvature
$H_3$ and the number $g$ of distinct principal curvatures  are constant, then $M^4$ is isoparametric. Tang-Wei-Yan in \cite{tang} and Tang-Yan in \cite{tang1} generalized the theorem of de Almeida
and Brito (\cite{ad}) for $n=3$ to any dimension $n$.
\begin{theorem}(\cite{tang,tang1})\label{tany}
Let $M^n (n\geq 4)$ be a closed immersed hypersurface in $\mathbb{S}^{n+1}$. If the
following conditions are satisfied:\\
(i) $f_k=\sum_{i=1}^n\lambda_i^k, (k=1,\cdots,n-1)$ are constants for the principal curvatures $\lambda_1,\lambda_2, \cdots, \lambda_n$;\\
(ii) the scalar curvature $R\geq 0$;\\
then $M^n$ is isoparametric. Moreover, if $M^n$ has $n$ distinct principal curvatures somewhere, then $R=0$.
\end{theorem}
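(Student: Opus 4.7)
The first step is a reduction. By Newton's identities, hypothesis (i) is equivalent to the elementary symmetric functions $e_1,\ldots,e_{n-1}$ of the principal curvatures being constants on $M^n$; only the Gauss--Kronecker curvature $e_n=\lambda_1\cdots\lambda_n$ can vary. Moreover, since $R = n(n-1) + f_1^2 - f_2$, hypothesis (i) automatically forces $R$ to be a constant, so (ii) becomes a sign condition on the value of this constant. Isoparametricity thus reduces to showing $e_n$ is constant, the key structural remark being that the characteristic polynomial of the shape operator $A$ has all but one of its coefficients fixed on $M^n$.

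For the main step, I would exploit the Newton transformation
\[ T_{n-1} = e_{n-1}I - e_{n-2}A + \cdots + (-1)^{n-1}A^{n-1},\]
which satisfies $A\,T_{n-1} = e_n I$ by Cayley--Hamilton. Because the ambient sphere has constant sectional curvature, $\nabla A$ is Codazzi and a classical identity of Reilly gives $\operatorname{div}T_{n-1}=0$ as a $(1,1)$-tensor. I would then integrate the divergence of the vector field $X = T_{n-1}\nabla e_n$ over the closed $M^n$: the left-hand side vanishes, and unfolding $\operatorname{div}X$ via a Bochner/Simons-type identity, using Codazzi symmetry together with the constancy of $e_1,\ldots,e_{n-1}$, should yield an integrated equality of the schematic form
\[ \int_{M^n}\bigl(\Phi(\nabla e_n,\nabla e_n) + R\,\Psi\bigr) = 0,\]
with $\Phi$ a positive semidefinite quadratic form and $\Psi \geq 0$. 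Under $R\geq 0$ both terms must then vanish, giving $\nabla e_n \equiv 0$ and hence isoparametricity.

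For the ``moreover'' clause, isoparametricity together with a point of $n$ distinct principal curvatures forces $g=n$ everywhere, since the multiplicity profile is constant along isoparametric hypersurfaces. The Cartan--M\"unzner classification restricts $g\in\{1,2,3,4,6\}$, so combined with $n\geq 4$ this leaves $n\in\{4,6\}$. In both cases the principal curvatures admit the explicit form $\lambda_k = \cot\!\bigl(\theta+(k-1)\pi/n\bigr)$ for a constant $\theta$, and the arithmetic-progression identities $\sum_k\cot(\theta+(k-1)\pi/n)=n\cot(n\theta)$ and $\sum_k\csc^2(\theta+(k-1)\pi/n)=n^2\csc^2(n\theta)$ give
\[ f_1^2 - f_2 = n^2\cot^2(n\theta) - \bigl(n^2\csc^2(n\theta) - n\bigr) = -n(n-1),\]
whence $R = n(n-1)+f_1^2-f_2 = 0$.

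The main obstacle is producing the favorable sign in the second paragraph. The principal curvatures are not globally smooth at points where multiplicities jump, so the Newton-tensor calculus must be carried out tensorially in $A$ rather than eigenvalue by eigenvalue. Constancy of $e_1,\ldots,e_{n-1}$ confines the cross terms tightly, but arranging the unfolding of $\operatorname{div}(T_{n-1}\nabla e_n)$ so that the residual Ricci-type contribution carries precisely the factor $R$---which is what makes condition (ii) essential---is the delicate algebraic heart of the argument.
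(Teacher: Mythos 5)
The paper does not prove this theorem: it is quoted verbatim from \cite{tang,tang1}, so there is no in-paper proof to compare against. The closest internal analogue of the real argument is the computation in Section 3, where the authors build the frame-dependent $3$-forms $\theta_{ij}=\omega_k\wedge\omega_l\wedge\omega_{ij}$, weight them by symmetric functions of the principal curvatures, and compute $d\Phi$ explicitly using the relations (\ref{con3}) that constancy of $f_1,f_2,f_3$ imposes on the derivatives $h_{ii,k}$. That is also the structure of the Tang--Wei--Yan proof (generalizing de Almeida--Brito): the whole theorem lives in the explicit verification that the coefficients of the $h_{ij,k}^2$ terms in $d\Phi$ are nonnegative and that the curvature terms assemble into a multiple of $R$.

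Your reduction via Newton's identities is correct, as is the observation that $R$ is automatically constant; and your treatment of the ``moreover'' clause via M\"unzner's $\lambda_k=\cot(\theta+(k-1)\pi/n)$ and the cotangent/cosecant summation identities is a clean, correct derivation of $f_1^2-f_2=-n(n-1)$, hence $R=0$. The genuine gap is the entire middle of the argument. You propose integrating $\operatorname{div}(T_{n-1}\nabla e_n)$ and assert, without derivation, that it ``should yield'' $\int_M(\Phi(\nabla e_n,\nabla e_n)+R\,\Psi)=0$ with $\Phi\succeq 0$ and $\Psi\geq 0$. Nothing in the proposal justifies this sign structure, and it is not a routine unfolding: $\operatorname{tr}(T_{n-1}\operatorname{Hess}e_n)$ produces second derivatives of the eigenvalues and mixed first-derivative terms whose coefficients involve differences and products of the $\lambda_i$ with no a priori sign, and the Ricci-identity exchange generates curvature terms that do not obviously collapse to a nonnegative multiple of $R$. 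The positivity in the known proofs is obtained only after using constancy of all of $f_1,\dots,f_{n-1}$ to express every $h_{ii,k}$ as an explicit rational multiple of a single one (as in (\ref{con3})) and then checking each resulting coefficient by hand on the open dense set where the multiplicity profile is locally constant --- precisely the eigenvalue-by-eigenvalue analysis your tensorial formulation sets aside. As written, the ``delicate algebraic heart'' you defer is the theorem, so the proposal does not constitute a proof.
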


For $n=4$, Deng-Gu-Wei
in \cite{dgw} dropped the non-negativity assumption of the scalar curvature
under the condition $H_3=0$, and proved  the following theorem,
\begin{theorem}\cite{dgw}\label{wei}
 Any closed minimal Willmore hypersurface $M^n$ of $\mathbb{S}^5$ with constant scalar
curvature must be isoparametric. To be precise, $M^4$ is either an equatorial $4$ sphere, a
product of sphere $\mathbb{S}^2(\frac{\sqrt{2}}{2})\times \mathbb{S}^2(\frac{\sqrt{2}}{2})$
or a Cartan's minimal hypersurface.\\
In particular, $S$ can only be $0, 4, 12$.
\end{theorem}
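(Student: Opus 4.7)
The plan is to reduce the problem to showing that all elementary symmetric functions of the principal curvatures are constant on $M^4$, and then to invoke the classification of isoparametric hypersurfaces of $\mathbb{S}^5$ together with minimality.

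Let $\lambda_1,\dots,\lambda_4$ denote the principal curvatures of $M^4$ and set $H_k = \sum_i \lambda_i^k$. Minimality gives $H_1=0$; the Willmore condition for a minimal hypersurface in a sphere is equivalent to $H_3=0$; and by the Gauss equation $R = n(n-1) - S$, constancy of the scalar curvature is equivalent to constancy of $S = H_2$. Newton's identities then yield $\sigma_1 = 0$, $\sigma_2 = -S/2$, $\sigma_3 = 0$, so only the Gauss--Kronecker curvature $K = \sigma_4$ (equivalently $H_4$) can a priori be non-constant. If we can establish that $K$ is constant on $M^4$, then the characteristic polynomial $\prod_i(t-\lambda_i)$ has constant coefficients, each $\lambda_i$ is constant, and $M^4$ is isoparametric.

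To establish the constancy of $K$ I would proceed in the Peng--Terng tradition. Simons' formula combined with $\Delta S = 0$ gives the pointwise identity $|\nabla h|^2 = S(S-4)$; in particular $S \geq 4$ unless $S = 0$. I would then compute $\Delta H_3$ and $\Delta H_4$ by repeated use of the Simons, Codazzi and Ricci identities, expressing each as a polynomial expression in the components $h_{ij}$ and their first covariant derivatives $h_{ijk}$. The vanishing $\Delta H_3 = 0$ produces a first algebraic relation among the $\lambda_i$ and the $h_{ijk}$; the formula for $\Delta H_4$ provides a second. The goal is to combine these two identities, together with an integration by parts over the closed manifold $M^4$, into an integral inequality whose integrand dominates $|\nabla K|^2$ by a sum of squares. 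This forces $\nabla K \equiv 0$, and hence $M^4$ is isoparametric.

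The main obstacle, as I expect it, is the bookkeeping of the cross-terms in $\Delta H_4$: the cubic and quartic expressions in the $h_{ij}$ and $h_{ijk}$ do not in general factor as sums of squares, and one must exploit the coincidences forced by the simultaneous vanishing of $H_1$ and $H_3$ to absorb them. This is the place where the Willmore condition does essential work beyond the mere minimality. Once $K$ is constant and $M^4$ isoparametric, the Cartan--M\"unzner classification restricts the number $g$ of distinct principal curvatures in $\mathbb{S}^5$ to a small list; combining with minimality leaves only the equatorial $\mathbb{S}^4$ ($g=1$, $S=0$), the Clifford torus $\mathbb{S}^2(\tfrac{\sqrt{2}}{2})\times\mathbb{S}^2(\tfrac{\sqrt{2}}{2})$ ($g=2$, $S=4$), and Cartan's minimal hypersurface ($S=12$), whence $S \in \{0,4,12\}$ as claimed.
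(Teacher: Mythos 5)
This statement is not proved in the paper at all: it is quoted verbatim from Deng--Gu--Wei \cite{dgw} and used as a black box in the proof of Theorem \ref{th1}, so there is no in-paper argument to compare against. Judged on its own terms, your proposal gets the framing right --- minimality gives $f_1=0$, the Willmore condition for a minimal hypersurface gives $f_3=0$, constant scalar curvature gives $f_2=S$ constant, and Newton's identities reduce everything to the constancy of $\sigma_4$ --- and the endgame (M\"unzner's restriction on $g$, plus minimality, plus $S=n(g-1)$ for the minimal member of each isoparametric family) correctly yields $S\in\{0,4,12\}$. One small point there: in the $g=2$ case the Willmore condition, not minimality alone, is what singles out $\mathbb{S}^2(\tfrac{\sqrt2}{2})\times\mathbb{S}^2(\tfrac{\sqrt2}{2})$ over $\mathbb{S}^1\times\mathbb{S}^3$ (the latter is minimal Clifford with $S=4$ but has $f_3\neq 0$); this does not affect the list of values of $S$ but does affect the precise classification you assert.

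The genuine gap is the middle step, which is the entire content of the theorem: you never actually prove that $\sigma_4$ is constant. What you offer is a program --- compute $\Delta f_3$ and $\Delta f_4$, integrate by parts, and hope the result dominates $|\nabla K|^2$ by a sum of squares --- and you yourself flag that the cross-terms ``do not in general factor as sums of squares'' without resolving how they are absorbed. Integrating $\Delta f_4$ over a closed manifold yields a single scalar identity, and there is no reason a single identity of this type should force the pointwise vanishing of $\nabla\sigma_4$; indeed the actual argument in \cite{dgw} is not an integral sum-of-squares estimate in the Peng--Terng style but a long, delicate pointwise analysis in a principal frame, with separate treatment of the loci where the number of distinct principal curvatures is $4$, $3$, or fewer. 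So the proposal, as written, reduces the theorem to exactly the hard part and then asserts a mechanism for that part which is both unexecuted and, in the simple form stated, unlikely to close.
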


If $S$ is not constant,  the length of the second fundamental form $S$ has many gaps and that
as analytic geometric object the minimal hypersurfaces have some rigidities. Peng-Terng (\cite{peng},\cite{peng1}) obtained that there
exists a positive constant $\delta(n)$ depending only on the dimension $n$, such that if $n\leq S\leq n+\delta(n), n\leq 5$, then $S\equiv n$.
Later, Cheng-Ishikawa (\cite{cheng}) improved the previous pinching constant when $n\leq 5$, Wei-Xu (\cite{weixu}) extended
the result to $n=6,7$, and Zhang (\cite{zhang}) promoted it to $n\leq 8$. Finally, Ding-Xin (\cite{ding}) proved all the dimensions.
Over the years, there have been many important developments on Chern Conjecture, see for example  \cite{ge}, \cite{li3}, \cite{LS}, \cite{lif} and \cite{scher}.

In this paper, we study the minimal hypersurfaces in $\mathbb{S}^5$ and prove the following main theorem.
\begin{theorem}\label{th1}
Let $x: M^4\to \mathbb{S}^5$ be a closed minimal  hypersurface  with constant squared length of
the second fundamental form $S$  in the $5$-dimensional sphere.
If the $3$-mean curvature $H_3$  and the number $g$ of distinct principal curvatures are constant,
then $M^4$ is isoparametric and $S$ can only be $0, 4, 12$.\\
Furthermore, $M^4$ is
 either an $4$-dimensional equatorial sphere, a product $\mathbb{S}^2(\frac{\sqrt{2}}{2}) \times \mathbb{S}^2(\frac{\sqrt{2}}{2})$, or a Cartan's minimal hypersurface.
\end{theorem}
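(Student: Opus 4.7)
The plan is to argue case by case on the constant value of $g\in\{1,2,3,4\}$, showing in each case that $M^{4}$ is in fact isoparametric; M\"unzner's classification of isoparametric hypersurfaces in $\mathbb{S}^{5}$ combined with minimality then pins down $S\in\{0,4,12\}$ and identifies $M^{4}$ as one of the listed examples. Setting $f_{k}=\sum_{i}\lambda_{i}^{k}$, minimality gives $f_{1}\equiv 0$, constancy of $S$ gives $f_{2}\equiv S$, and constancy of $H_{3}$ gives $f_{3}$ constant; Newton's identities then make the elementary symmetric functions $e_{1}=0$, $e_{2}$, $e_{3}$ constant, so the characteristic polynomial of the shape operator is
\begin{equation*}
p(t)=t^{4}+e_{2}t^{2}-e_{3}t+e_{4},
\end{equation*}
with only $e_{4}$ (equivalently the Gauss-Kronecker curvature) potentially non-constant.

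The subcases $g=1,2$ are elementary: for $g=1$ total umbilicity plus minimality forces $M^{4}$ totally geodesic and $S=0$, while for $g=2$ the multiplicities $(m_{1},m_{2})$ summing to $4$ are locally and hence globally constant, and the relations $m_{1}\lambda+m_{2}\mu=0$, $m_{1}\lambda^{2}+m_{2}\mu^{2}=S$ then force $\lambda,\mu$ to be constants, yielding a minimal Clifford torus with $S=4$. For $g=3$, the polynomial $p(t)$ has a repeated root, so $\operatorname{disc}(p)=0$; since $\operatorname{disc}(p)$ is a polynomial in $e_{4}$ with constant coefficients, $e_{4}$ assumes only finitely many values on the connected manifold $M^{4}$ and is therefore constant. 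With the Gauss-Kronecker curvature constant, the theorem of de Almeida-Brito-Scherfner-Weiss \cite{de} implies that $M^{4}$ is isoparametric; but M\"unzner's equal-multiplicity restriction for $g=3$ demands $3\mid 4$, which is impossible, so this subcase does not occur.

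The crux is the subcase $g=4$. Each $\lambda_{i}$ is smooth locally, and differentiating $f_{1},f_{2},f_{3}$ yields the $3\times 4$ Vandermonde-type system
\begin{equation*}
\sum_{i=1}^{4}\lambda_{i}^{k}\,\nabla\lambda_{i}=0,\qquad k=0,1,2,
\end{equation*}
of rank three, so the vector fields $\nabla\lambda_{i}$ all lie on a common one-dimensional line. To upgrade this to $\nabla\lambda_{i}\equiv 0$, I would expand the Codazzi equations in a local principal frame $\{E_{i}\}$, use them to express the connection coefficients $\Gamma^{k}_{ii}=E_{k}(\lambda_{i})/(\lambda_{i}-\lambda_{k})$ for $i\neq k$, and substitute into Simons-type second-order identities arising from $\Delta f_{2}=\Delta f_{3}=0$. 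Integrating over $M^{4}$, the target is to conclude $|\nabla h|^{2}=0$ and hence that $M^{4}$ is isoparametric.

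The principal obstacle is precisely this $g=4$ step, since the analogous result of Tang-Yang \cite{tali} and Theorem \ref{tany} of Tang-Wei-Yan/Tang-Yan both invoke the scalar-curvature bound $R\geq 0$ (equivalently $S\leq 12$), which is not assumed here; removing this bound seems to require a delicate integral identity beyond the Simons formula or a refined pointwise argument that exploits the strict distinctness of the four eigenvalues. Once $M^{4}$ is isoparametric, M\"unzner's list in $\mathbb{S}^{5}$ leaves only $g\in\{1,2,4\}$ with specific multiplicities, and a direct computation of $S$ in each case yields the values $0$, $4$, $12$, realized respectively by the equatorial $4$-sphere, the Clifford torus $\mathbb{S}^{2}(\sqrt{2}/2)\times\mathbb{S}^{2}(\sqrt{2}/2)$, and Cartan's minimal hypersurface.
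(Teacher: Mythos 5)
Your treatment of $g\le 3$ is fine and matches what the paper's Proposition \ref{pro1} asserts: with $f_1=0$ and $f_2,f_3$ constant the first three elementary symmetric functions are constant, and when there are at most three distinct principal curvatures with (necessarily constant) multiplicities these relations force the curvatures themselves to be constant; your discriminant argument for $g=3$ is a workable variant, and the appeal to \cite{de} is not even needed there. The problem is the case $g=4$, which is the entire content of the theorem, and there your proposal contains a genuine gap that you yourself flag: differentiating $f_1,f_2,f_3$ only shows that the four gradients $\nabla\lambda_i$ are proportional to one another, and the Codazzi/Simons machinery you describe is exactly what breaks down without the hypothesis $R\ge 0$ (equivalently $S\le 12$) used in \cite{tali} and Theorem \ref{tany}. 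No argument is supplied to close this, so the proposal does not prove the theorem.

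The paper closes it by a different mechanism that your outline misses entirely. For $g=4$ the unit principal directions are determined up to sign, so the $3$-forms $\theta_{ij}$ and hence $\Phi=\sum_{i<j}(\lambda_i+\lambda_j)\theta_{ij}$ are globally defined on $M^4$. A direct computation using the Codazzi relations (\ref{con3}) and the Gauss equation gives $d\Phi=f_3\big(\sum_i c_i h_{44,i}^2+1\big)\,dM$ with each $c_i\ge 0$ (equation (\ref{integ})), so Stokes' theorem on the closed manifold forces $f_3=0$, i.e.\ $H_3=0$ (Proposition \ref{pro2}). The point is not to prove isoparametricity directly when $g=4$, but to reduce to the Willmore case $H_3=0$, where Theorem \ref{wei} of Deng--Gu--Wei \cite{dgw} (which requires no sign condition on $R$) yields isoparametricity and the list $S\in\{0,4,12\}$ together with the three model hypersurfaces. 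Without this integral identity, or some substitute for it, the crucial step of your argument remains open.
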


\begin{remark}
By adding a weaker condition that the number $g$ of distinct principal curvatures is constant,  our main results  remove the condition of the scalar curvature $R\geq 0$ in Theorem \ref{tany} (\cite{tang}), and relax the condition $H_3=0$ in Theorem \ref{wei} (\cite{dgw}) to the condition $H_3=constant$.
\end{remark}

\par\noindent
\section{Preliminaries}

Let $x:M^n\to \mathbb{S}^{n+1}$ be an $n$-dimensional immersed  hypersurface
in an $(n+1)$-dimensional sphere $\mathbb{S}^{n+1}$. For any $p\in M^n$ we choose a local orthonormal frame $\{e_1,\cdots,e_n,e_{n+1}\}$
 around $p$ such that $e_1,\cdots,e_n$ are tangential to $M^n$ and $e_{n+1}$ is normal to $M^n$. Let $\{\omega_1,\cdots,\omega_n\}$ be the dual coframe and $\{\omega_{ij}|~1\leq i,j\leq n\}$ be the connection $1$-forms. In this section we make the following convention
on the range of indices, $$1\leq i,j,k\leq n.$$
Then the structure equations of $M^n$ are given by
\begin{equation}\label{stru}
\begin{split}
&dw_{i}=\sum_jw_{ij}\wedge w_{j}, ~~ w_{ij}+w_{ji}=0,\\
&dw_{ij}=\sum_k w_{ik}\wedge w_{kj}-\frac{1}{2}\sum_{k,l}R_{ijkl}w_{k}\wedge w_{l},
\end{split}
\end{equation}
where $R_{ijkl}$ is the curvature tensor of the induced metric on $M^n$.

Let $II=\sum_{ij}h_{ij}\omega_i\otimes\omega_j$ denote the second fundamental form, $ H=\frac{1}{n}\sum_{i}h_{ii}$ the mean curvature.
The
Gauss equation is
\begin{equation}\label{gauss}
R_{ijkl}=\delta_{ik}\delta_{jl}-\delta_{il}\delta_{jk}+h_{ik}h_{jl}-h_{il}h_{jk},
\end{equation}
and the Codazzi equation is
\begin{equation}\label{coda}
h_{ij,k}=h_{ik,j},
\end{equation}
where the covariant derivative of the second fundamental form is defined by
$$\sum_mh_{ij,m}w_{m}=dh_{ij}+\sum_mh_{mj}w_{mi}+\sum_mh_{im}w_{mj}.$$

The second covariant derivative of the second fundamental form is defined by
$$\sum_mh_{ij,km}w_{m}=dh_{ij,k}+\sum_mh_{mj,k}w_{mi}+\sum_mh_{im,k}w_{mj}+\sum_mh_{ij,m}\omega_{mk}.$$
Thus we have the following Ricci identity
\begin{equation}\label{ricd}
h_{ij,kl}-h_{ij,lk}=\sum_mh_{mj}R_{mikl}+\sum_mh_{im}R_{mjkl}.
\end{equation}

By the Gauss equation (\ref{gauss}), we obtain the Ricci curvature $R_{ij}$ and the scalar curvature $R$ of the hypersurface,
\begin{equation}\label{ricc}
\begin{split}
&R_{ij}=(n-1)\delta_{ij}+nHh_{ij}-\sum_{m}h_{im}h_{mj},\\
&R=n(n-1)+n^2H^2-S,
\end{split}
\end{equation}
where $S=\sum_{i,j}h_{ij}^2$ is the square norm of the second fundamental form.

The shape operator $A$ is the dual  tensor  of the second fundamental form $II$. Let $\{\lambda_1,\cdots,\lambda_n\}$ be the eigenvalues of the shape operator $A$, which are called the principal curvatures of the hypersurface. The principal curvatures $\{\lambda_1,\cdots,\lambda_n\}$ are continuous on $M^n$.  We denote by $g$  the number of the distinct principal curvatures, then $g$ is a local constant. If $g$ is constant on $M^n$, then the principal curvatures $\{\lambda_1,\cdots,\lambda_n\}$ are smooth. If the principal curvatures $\{\lambda_1,\cdots,\lambda_n\}$ are constant,  the hypersurface $x$ is called an isoparametric hypersurface.

Let $\sigma_r : \mathbb{R}^n \to \mathbb{R}$ be the elementary symmetric functions defined  by
$$\sigma_r(x_1, \cdots, x_n) = \sum_{i_1 < i_2 < \cdots < i_r} x_{i_1} x_{i_2} \cdots x_{i_r}.$$
Then $r$-mean curvature $H_r$ of the hypersurface is  defined by
$$H_r = \frac{1}{\binom{n}{r}}\sigma_r(\lambda_1,\cdots,\lambda_n)=\sum_{i_1 < i_2 < \cdots < i_r} \lambda_{i_1} \lambda_{i_2} \cdots \lambda_{i_r}.$$
Let
$$f_k = \text{Tr}(A^k),$$
then $f_1=nH=nH_1,~f_2=\text{Tr}(A^2)=S$.

When $n=4$ and $H=0$, we have the equations,
\begin{equation}\label{niu}
\begin{split}
&f_1 = \sigma_1 = n H_1=0,\\
&f_2 = \sigma_1^2 - 2 \sigma_2 = S, \\
&f_3 = \sigma_1^3 - 3 \sigma_1 \sigma_2 + 3 \sigma_3=3 \sigma_3, \\
&f_4 = \sigma_1^4 - 4 \sigma_1^2 \sigma_2 + 4 \sigma_1 \sigma_3 + 2 \sigma_2^2 - 4 \sigma_4 = \frac{S^2}{2} - 4 \sigma_4.
\end{split}
\end{equation}

\par\noindent
\section{Proof of Theorem \ref{th1}}
Let $x: M^4\to \mathbb{S}^5$ be a closed minimal  hypersurface  with constant squared length of
the second fundamental form $S$  in the $5$-dimensional sphere,
and the $3$-mean curvature $H_3$  and the number $g$ of distinct principal curvatures are constant.
For a point $p\in M^n$, there is   an open  subset $U$ of $M^4$, such that we can choose an orthonormal basis $\{e_1,e_2,e_3,e_4\}$  for $TU$ such that
$$(h_{ij})=diag(\lambda_1,\lambda_2,\lambda_3,\lambda_4).$$
The smooth orthonormal frame $\{e_1,e_2,e_3,e_4\}$ are called  unit principal vectors.

By (\ref{niu}), we have
\begin{equation}\label{cond1}
\begin{split}
&f_1=nH=\lambda_1+\lambda_2+\lambda_3+\lambda_3=0, \\
&f_2=S=\lambda_1^2+\lambda_2^2+\lambda_3^2+\lambda_4^2=constant,\\
&f_3=\lambda_1^3+\lambda_2^3+\lambda_3^3+\lambda_4^3=constant.
\end{split}
\end{equation}

By (\ref{cond1}) we have the following results,
\begin{PROPOSITION}\label{pro1}
Let $x: M^4\to \mathbb{S}^5$ be a closed minimal  hypersurface  with constant squared length of
the second fundamental form $S$  in the $5$-dimensional sphere, and let both
the $3$-mean curvature $H_3$  and the number $g$ of distinct principal curvatures be constant. If  $g\leq 3$, then $M^4$ is an isoparametric hypersurface.
\end{PROPOSITION}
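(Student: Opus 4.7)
The plan is to split into the three subcases $g=1$, $g=2$, $g=3$, and in each case use only the three elementary power-sum identities from (\ref{cond1}),
\[
f_1=0,\qquad f_2=S=\mathrm{const},\qquad f_3=\mathrm{const},
\]
together with the fact that, since $g$ is constant, the distinct principal curvatures are smooth functions on $M^4$ (as recorded in Section~2). Because $M^4$ is closed and connected, it then suffices to show, in each case, that every distinct principal curvature takes only finitely many values.

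The cases $g=1$ and $g=2$ are immediate and do not even use the hypothesis on $H_3$. If $g=1$, then $4\lambda=f_1=0$ gives $\lambda\equiv 0$, so $M^4$ is totally geodesic. If $g=2$ and the two distinct values $\mu_1,\mu_2$ have multiplicities $m_1,m_2$ with $m_1+m_2=4$, then $f_1=0$ yields $\mu_2=-\frac{m_1}{m_2}\mu_1$, and substituting into $f_2=S$ gives
\[
\mu_1^2=\frac{m_2\,S}{m_1(m_1+m_2)}=\mathrm{const}.
\]
Hence $\mu_1$ is one of two fixed values, so by continuity it is constant on $M^4$, and then so is $\mu_2$.

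The genuine case is $g=3$, where the multiplicity pattern must be $(2,1,1)$. I denote by $\nu_1$ the principal curvature of multiplicity $2$ and by $\nu_2,\nu_3$ the two of multiplicity $1$; since multiplicities cannot jump while $g$ stays constant, these labels define smooth functions on the connected manifold $M^4$. From $f_1=0$ and $f_2=S$ I obtain
\[
\nu_2+\nu_3=-2\nu_1,\qquad \nu_2\nu_3=3\nu_1^2-\tfrac{S}{2}.
\]
Plugging $\nu_2^3+\nu_3^3=(\nu_2+\nu_3)^3-3(\nu_2+\nu_3)\nu_2\nu_3$ into $f_3=2\nu_1^3+\nu_2^3+\nu_3^3$ collapses the third identity to
\[
12\nu_1^3-3S\,\nu_1=f_3=\mathrm{const}.
\]
Thus $\nu_1$ is a root of a fixed cubic polynomial, so $\nu_1$ takes at most three values on $M^4$; continuity on the connected manifold forces $\nu_1$ to be constant. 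Then $\nu_2+\nu_3$ and $\nu_2\nu_3$ are also constants, so $\nu_2,\nu_3$ are fixed roots of a quadratic with constant coefficients and are likewise constant. This gives the isoparametric conclusion.

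The argument is almost entirely algebraic, so the proof is short; the only point that requires care is the smoothness, on the connected manifold $M^4$, of the ``multiplicity-$2$'' branch $\nu_1$ in the $g=3$ case, which is exactly where the constancy of $g$ enters. Once that is granted, the cubic identity above closes the case immediately, and the role of the hypothesis $H_3=\mathrm{const}$ is precisely to provide the third power-sum relation that forces this cubic. The genuinely hard regime, which is not covered by this proposition, is $g=4$; there the three power sums no longer determine four unknowns and one must invoke the Codazzi and Ricci identities.
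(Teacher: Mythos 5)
Your proof is correct and follows exactly the route the paper intends: the paper offers no written proof of Proposition \ref{pro1} beyond the remark ``By (\ref{cond1}) we have the following results,'' and your case analysis for $g=1,2,3$ via the three constant power sums $f_1=0$, $f_2=S$, $f_3=3\sigma_3$ is precisely the argument being left to the reader. The computations (in particular the cubic $12\nu_1^3-3S\nu_1=f_3$ in the $g=3$ case) check out, and your attention to the local constancy of the multiplicities and the continuity/connectedness step is a welcome supplement to what the paper omits.
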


Next we consider $g=4$. Since the principal curvatures $\{\lambda_1,\lambda_2,\lambda_3,\lambda_4\}$ are simple,
 the unit principal vectors $\{e_1,e_2,e_3,e_4\}$ are determined up to a sign.
Let $\{\omega_1,\omega_2,\omega_3,\omega_4\}$ be the dual basis of $\{e_1,e_2,e_3,e_4\}$, and $\{\omega_{ij}|~1\leq i,j\leq 4\}$
the  connection $1$-forms.

By the definition of the covariant derivative of the second fundamental form,
we obtain the following equations,
\begin{equation}\label{con2}
\begin{split}
&e_i(\lambda_j)=e_i(h_{jj})=h_{jj,i},\\
&\omega_{ij}=\sum_m\frac{h_{ij,m}w_{m}}{\lambda_i-\lambda_j},~~i\neq j.
\end{split}
\end{equation}
Combining (\ref{cond1}) and (\ref{con2}), for  any fixed index $i$,
\begin{equation*}
\sum_jh_{jj,i}=0,~~\sum_j\lambda_jh_{jj,i}=0,~~\sum_j\lambda_j^2h_{jj,i}=0.
\end{equation*}
Thus we have
\begin{equation}\label{con3}
\begin{split}
&h_{11,i}= \frac{(\lambda_3 - \lambda_4)(\lambda_2 - \lambda_4)}{(\lambda_1 - \lambda_3)(\lambda_2 - \lambda_1)}h_{44,i}\\
&h_{22,i} = \frac{(\lambda_3 - \lambda_4)(\lambda_1 - \lambda_4)}{(\lambda_1 - \lambda_2)(\lambda_2 - \lambda_3)}h_{44,i}\\
&h_{33,i} = \frac{(\lambda_2 - \lambda_4)(\lambda_1 - \lambda_4)}{(\lambda_1 - \lambda_3)(\lambda_3 - \lambda_2)}h_{44,i}.
\end{split}
\end{equation}

We  introduce the $3$-forms $\{\theta_{ij}|~1\leq i,j\leq 4\}$ defined by
\begin{equation}
\begin{split}
\theta_{12}=\omega_3\wedge\omega_4\wedge\omega_{12},~~\theta_{13}=\omega_4\wedge\omega_2\wedge\omega_{13},
~~\theta_{14}=\omega_2\wedge\omega_3\wedge\omega_{14},\\
\theta_{23}=\omega_1\wedge\omega_4\wedge\omega_{23},~~\theta_{24}=\omega_3\wedge\omega_1\wedge\omega_{24},
~~\theta_{34}=\omega_1\wedge\omega_2\wedge\omega_{34}.
\end{split}
\end{equation}
Since the $\{\omega_1,\omega_2,\omega_3,\omega_4\}$ are determined up to a sign, the $3$-forms $\{\theta_{ij}|~1\leq i,j\leq 4\}$
can be globally well-defined on $M^4$.
Let $d\theta_{ij}=X_{ij}\omega_1\wedge\omega_2\wedge\omega_3\wedge\omega_4$. Combining (\ref{stru}), (\ref{con2}) and (\ref{con3}), we can get the following equations,
\begin{equation}\label{equ1}
\begin{split}
X_{12} &=
		\frac{(\lambda_3 - \lambda_4)\left[(\lambda_1 - \lambda_3)^2(\lambda_2 - \lambda_3) - (\lambda_1 - \lambda_4)^2(\lambda_2 - \lambda_4)\right]h_{44,1}^2}{(\lambda_1 - \lambda_2)^2(\lambda_1 - \lambda_3)^2(\lambda_2 - \lambda_3)^2} \\
		& + \frac{(\lambda_3 - \lambda_4)\left[(\lambda_1 - \lambda_3)(\lambda_2 - \lambda_3)^2 - (\lambda_1 - \lambda_4)(\lambda_2 - \lambda_4)^2\right]h_{44,2}^2}{(\lambda_1 - \lambda_2)^2(\lambda_1 - \lambda_3)^2(\lambda_2 - \lambda_3)^2} \\
		& + \frac{(\lambda_1 - \lambda_4)(\lambda_2 - \lambda_4)(\lambda_3 - \lambda_4)^2h_{44,3}^2}{(\lambda_1 - \lambda_2)^2(\lambda_1 - \lambda_3)^2(\lambda_2 - \lambda_3)^2} + \frac{(\lambda_3 - \lambda_4)^2h_{44,4}^2}{(\lambda_1 - \lambda_2)^2(\lambda_1 - \lambda_3)(\lambda_2 - \lambda_3)} \\
		& + \frac{2h_{12,3}^2}{(\lambda_1 - \lambda_3)(\lambda_2 - \lambda_3)} +
		\frac{2h_{12,4}^2}{(\lambda_1 - \lambda_4)(\lambda_2 - \lambda_4)} - R_{1212},
\end{split}
\end{equation}
\begin{equation}\label{equ2}
\begin{split}
X_{13} &=
		-\frac{(\lambda_2 - \lambda_4)\left[(\lambda_1 - \lambda_2)^2 (\lambda_2 - \lambda_3)+(\lambda_1 - \lambda_4)^2(\lambda_3 - \lambda_4) \right]h_{44,1}^2}{(\lambda_1 - \lambda_2)^2(\lambda_1 - \lambda_3)^2(\lambda_2 - \lambda_3)^2}\\
& + \frac{(\lambda_2 - \lambda_4)\left[(\lambda_1 - \lambda_2)(\lambda_2 - \lambda_3)^2 - (\lambda_1 - \lambda_4)(\lambda_3 - \lambda_4)^2\right]h_{44,3}^2}{(\lambda_1 - \lambda_2)^2(\lambda_1 - \lambda_3)^2(\lambda_2 - \lambda_3)^2}\\
&+ \frac{(\lambda_1 - \lambda_4)(\lambda_2 - \lambda_4)^2(\lambda_3 - \lambda_4)h_{44,2}^2}{(\lambda_1 - \lambda_2)^2(\lambda_1 - \lambda_3)^2(\lambda_2 - \lambda_3)^2}
- \frac{(\lambda_2 - \lambda_4)^2h_{44,4}^2}{(\lambda_1 - \lambda_2)(\lambda_1 - \lambda_3)^2(\lambda_2 - \lambda_3)} \\
		&- \frac{2h_{13,2}^2}{(\lambda_1 - \lambda_2)(\lambda_2 - \lambda_3)} + \frac{2h_{13,4}^2}{(\lambda_1 - \lambda_4)(\lambda_3 - \lambda_4)} - R_{1313},
\end{split}
\end{equation}
\begin{equation}\label{equ3}
\begin{split}
X_{14} &=
		\frac{\left[(\lambda_1 - \lambda_3)^2(\lambda_3 - \lambda_4) - (\lambda_1 - \lambda_2)^2(\lambda_2 - \lambda_4)\right]h_{44,1}^2}{(\lambda_1 - \lambda_2)^2(\lambda_1 - \lambda_3)^2(\lambda_2 - \lambda_3)}\\
&- \frac{(\lambda_3 - \lambda_4)h_{44,2}^2}{(\lambda_1 - \lambda_2)^2(\lambda_1 - \lambda_3)}
- \frac{(\lambda_2 - \lambda_4)h_{44,3}^2}{(\lambda_1 - \lambda_2)(\lambda_1 - \lambda_3)^2}\\
&+ \frac{\left[(\lambda_1 - \lambda_2)(\lambda_2 - \lambda_4)^2
			 - (\lambda_1 - \lambda_3)(\lambda_3 - \lambda_4)^2\right]h_{44,4}^2}{(\lambda_1 - \lambda_2)^2(\lambda_1 - \lambda_3)^2(\lambda_2 - \lambda_3)} \\
		& - \frac{2h_{14,2}^2}{(\lambda_1 - \lambda_2)(\lambda_2 - \lambda_4)} - \frac{2h_{14,3}^2}{(\lambda_1 - \lambda_3)(\lambda_3 - \lambda_4)} -R_{1414},
\end{split}
\end{equation}
\begin{equation}\label{equ4}
\begin{split}
X_{23}&=
	 \frac{(\lambda_1 - \lambda_4)^2(\lambda_2 - \lambda_4)(\lambda_3 - \lambda_4)h_{44,1}^2}{(\lambda_1 -  \lambda_2)^2(\lambda_1 - \lambda_3)^2(\lambda_2 - \lambda_3)^2}+ \frac{(\lambda_1 - \lambda_4)^2h_{44,4}^2}{(\lambda_1 - \lambda_2)(\lambda_1 - \lambda_3)(\lambda_2 - \lambda_3)^2}\\
&- \frac{(\lambda_1 - \lambda_4)\left[(\lambda_2 - \lambda_4)^2(\lambda_3 - \lambda_4) + (\lambda_1 - \lambda_2)^2(\lambda_1 - \lambda_3)\right]h_{44,2}^2}{(\lambda_1 - \lambda_2)^2(\lambda_1 - \lambda_3)^2(\lambda_2 - \lambda_3)^2} \\
  	& - \frac{(\lambda_1 - \lambda_4)\left[(\lambda_2 - \lambda_4)(\lambda_3 - \lambda_4)^2 +(\lambda_1 - \lambda_2) (\lambda_1 - \lambda_3)^2\right]h_{44,3}^2}{(\lambda_1 - \lambda_2)^2(\lambda_1 - \lambda_3)^2(\lambda_2 - \lambda_3)^2}\\
	& + \frac{2h_{23,1}^2}{(\lambda_1 - \lambda_2)(\lambda_1 - \lambda_3)} + \frac{2h_{23,4}^2}{(\lambda_2 - \lambda_4)(\lambda_3 - \lambda_4)} -R_{2323},
\end{split}
\end{equation}
\begin{equation}\label{equ5}
\begin{split}
X_{24} &=-\frac{(\lambda_3 - \lambda_4)h_{44,1}^2}{(\lambda_1 - \lambda_2)^2(\lambda_2 - \lambda_3)}
+ \frac{(\lambda_1 - \lambda_4)h_{44,3}^2}{(\lambda_1 - \lambda_2)(\lambda_2 - \lambda_3)^2}\\
&+ \frac{\left[(\lambda_3 - \lambda_4)(\lambda_2 - \lambda_3)^2
- (\lambda_1 - \lambda_4)(\lambda_1 - \lambda_2)^2\right]h_{44,2}^2}{(\lambda_1 - \lambda_2)^2(\lambda_1 - \lambda_3)(\lambda_2 - \lambda_3)^2} \\
&- \frac{\left[(\lambda_2 - \lambda_3)(\lambda_3 - \lambda_4)^2 + (\lambda_1 - \lambda_2)(\lambda_1 -    \lambda_4)^2\right]h_{44,4}^2}{(\lambda_1 - \lambda_2)^2(\lambda_1 - \lambda_3)(\lambda_2 - \lambda_3)^2} \\
  & + \frac{2h_{24,1}^2}{(\lambda_1 - \lambda_2)(\lambda_1 - \lambda_4)} - \frac{2h_{24,3}^2}{(\lambda_2 - \lambda_3)(\lambda_3 - \lambda_4)} -R_{2424},
\end{split}
\end{equation}
\begin{equation}\label{equ6}
\begin{split}
X_{34} &=
\frac{(\lambda_2 - \lambda_4)h_{44,1}^2}{(\lambda_1 - \lambda_3)^2(\lambda_2 - \lambda_3)}
+ \frac{(\lambda_1 - \lambda_4)h_{44,2}^2}{(\lambda_1 - \lambda_3)(\lambda_2 - \lambda_3)^2}\\
&+ \frac{\left[(\lambda_2 - \lambda_3)^2(\lambda_2 - \lambda_4)
  	- (\lambda_1 - \lambda_3)^2(\lambda_1 - \lambda_4)\right]h_{44,3}^2}{(\lambda_1 - \lambda_2)(\lambda_1 - \lambda_3)^2(\lambda_2 - \lambda_3)^2} \\
  	& + \frac{\left[(\lambda_2 - \lambda_3)(\lambda_2 - \lambda_4)^2 - (\lambda_1 - \lambda_3)(\lambda_1 - \lambda_4)^2\right]h_{44,4}^2}{(\lambda_1 - \lambda_2)(\lambda_1 - \lambda_3)^2(\lambda_2 - \lambda_3)^2} \\
  	& + \frac{2h_{34,1}^2}{(\lambda_1 - \lambda_3)(\lambda_1 - \lambda_4)} + \frac{2h_{34,2}^2}{(\lambda_2 - \lambda_3)(\lambda_2 - \lambda_4)} -R_{3434}.
\end{split}
\end{equation}
Now we define the $3$-form $\Phi$ as follows,
$$\Phi=\sum_{i<j}(\lambda_i+\lambda_j)\theta_{ij}.$$
Clearly, the $3$-form $\Phi$ is globally well defined on $M^4$. To calculate $d\Phi$, we need the following equations,
\begin{equation}\label{equ7}
\begin{split}
&\frac{ \lambda_{1}+ \lambda_{2}}{\left(\lambda_{1}-\lambda_{3}\right) \left(\lambda_{2}-\lambda_{3}\right)}-\frac{ \lambda_{1}+\lambda_{3}}{\left(\lambda_{1}-\lambda_{2}\right) \left(\lambda_{2}-\lambda_{3}\right)}+\frac{ \lambda_{2}+ \lambda_{3}}{\left(\lambda_{1}-\lambda_{2}\right) \left(\lambda_{1}-\lambda_{3}\right)}=0,\\
&\frac{\lambda_{1}+\lambda_{2}}{\left(\lambda_{2}-\lambda_{4}\right) \left(\lambda_{1}-\lambda_{4}\right)}-\frac{\lambda_{1}+\lambda_{4}}{\left(\lambda_{1}-\lambda_{2}\right) \left(\lambda_{2}-\lambda_{4}\right)}+\frac{\lambda_{2}+\lambda_{4}}{\left(\lambda_{1}-\lambda_{2}\right) \left(\lambda_{1}-\lambda_{4}\right)}
=0,\\
&\frac{\lambda_{1}+\lambda_{3}}{\left(\lambda_{1}-\lambda_{4}\right) \left(\lambda_{3}-\lambda_{4}\right)}-\frac{\lambda_{1}+\lambda_{4}}{\left(\lambda_{3}-\lambda_{4}\right) \left(\lambda_{1}-\lambda_{3}\right)}+\frac{\lambda_{3}+\lambda_{4}}{\left(\lambda_{1}-\lambda_{3}\right) \left(\lambda_{1}-\lambda_{4}\right)}
=0,\\
&\frac{\lambda_{2}+\lambda_{3}}{\left(\lambda_{2}-\lambda_{4}\right) \left(\lambda_{3}-\lambda_{4}\right)}-\frac{\lambda_{2}+\lambda_{4}}{\left(\lambda_{2}-\lambda_{3}\right) \left(\lambda_{3}-\lambda_{4}\right)}+\frac{\lambda_{3}+\lambda_{4}}{\left(\lambda_{2}-\lambda_{3}\right) \left(\lambda_{2}-\lambda_{4}\right)}
=0,
\end{split}
\end{equation}
and
\begin{equation}\label{equ8}
\sum_{i<j}(\lambda_i+\lambda_j) R_{ijij}=-f_3,~~\lambda_{i}^{3}+\sigma_4(\lambda_{i})-\frac{1}{2} \lambda_{i} f_{2} =\frac{1}{3}{f_3},
\end{equation}
where the meaning of $\sigma_4(\lambda_{i})=\lambda_1\cdots \hat{\lambda_i}\cdots \lambda_4$ is to remove $\lambda_i$, for example, $$\sigma_4(\lambda_{1})=\lambda_2\lambda_3\lambda_4,~~
\sigma_4(\lambda_{3})=\lambda_1\lambda_2\lambda_4.$$
By a relatively long calculation process,  combining (\ref{equ1}),$\cdots,$ (\ref{equ7}) and (\ref{equ8}), we can obtain the following equation,
\begin{equation}\label{integ}
d\Phi=f_3\Big(\sum_ic_ih_{44,i}^2+1\Big)\omega_1\wedge\omega_2\wedge\omega_3\wedge\omega_4=f_3\Big(\sum_ic_ih_{44,i}^2+1\Big)dM,
\end{equation}
where $$c_i=\frac{2(3f_2-4\lambda_i^2)}{3(\lambda_1-\lambda_2)^2(\lambda_1-\lambda_3)^2(\lambda_2-\lambda_3)^2}.$$

\begin{PROPOSITION}\label{pro2}
Let $x: M^4\to \mathbb{S}^5$ be a closed minimal  hypersurface  with constant squared length of
the second fundamental form $S$  in the $5$-dimensional sphere.
If the $3$-mean curvature $H_3$ is constant  and the number of distinct principal curvatures  $g=4$, then $H_3=0$.
\end{PROPOSITION}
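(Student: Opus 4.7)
The plan is to integrate the identity (\ref{integ}) over the closed manifold $M^4$ and apply Stokes' theorem. Since $M^4$ is compact without boundary, $\int_{M^4} d\Phi = 0$. Because $H_3$ is constant by hypothesis, $f_3$ is also constant (by (\ref{niu}) with $H_1 = 0$), so we may pull it outside the integral to obtain
\begin{equation*}
f_3 \int_{M^4}\Big(\sum_{i=1}^{4} c_i\, h_{44,i}^2 + 1\Big)\, dM = 0.
\end{equation*}
It therefore suffices to show that the integrand on the right is strictly positive everywhere on $M^4$; this immediately forces $f_3 = 0$ and hence $H_3 = 0$.

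The crux is the pointwise inequality $c_i > 0$ for each $i$. The denominator $3(\lambda_1 - \lambda_2)^2(\lambda_1 - \lambda_3)^2(\lambda_2 - \lambda_3)^2$ of $c_i$ is strictly positive because $g = 4$ guarantees the principal curvatures are pairwise distinct at every point. For the numerator $2(3 f_2 - 4\lambda_i^2)$, I would exploit the minimality condition $\sum_j \lambda_j = 0$: writing $\lambda_i = -\sum_{j \neq i}\lambda_j$ and applying the Cauchy--Schwarz inequality,
\begin{equation*}
\lambda_i^2 = \Big(\sum_{j \neq i}\lambda_j\Big)^2 \leq 3 \sum_{j \neq i}\lambda_j^2 = 3\bigl(f_2 - \lambda_i^2\bigr),
\end{equation*}
which yields $4\lambda_i^2 \leq 3 f_2$, with equality if and only if the three values $\{\lambda_j : j \neq i\}$ are all equal. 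This equality case is ruled out precisely by $g = 4$, so we obtain the strict inequality $3 f_2 - 4\lambda_i^2 > 0$, and hence $c_i > 0$.

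Putting the pieces together, $\sum_i c_i h_{44,i}^2 + 1 \geq 1$ at every point of $M^4$, so the integral is bounded below by $\mathrm{vol}(M^4) > 0$. Combined with the vanishing of $\int_{M^4} d\Phi$, this forces $f_3 = 0$, equivalently $H_3 = 0$. The main difficulty has in fact already been cleared by the lengthy derivation of (\ref{integ}); once that identity is in hand, the remaining argument reduces to the elementary Cauchy--Schwarz observation above, whose strictness is supplied exactly by the hypothesis $g = 4$.
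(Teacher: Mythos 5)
Your proposal is correct and follows essentially the same route as the paper: integrate the identity (\ref{integ}) over the closed manifold, use that $f_3$ is constant, and establish $3f_2-4\lambda_i^2\geq 0$ via the same Cauchy--Schwarz argument with $\sum_j\lambda_j=0$, so that the integrand $\sum_i c_i h_{44,i}^2+1\geq 1>0$ forces $f_3=0$. Your extra observation that $c_i$ is in fact strictly positive under $g=4$ is true but not needed, since the additive $+1$ already guarantees positivity of the integrand.
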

\begin{proof}
Since $g=4$, we can define globally the $3$-form $\Phi$.

Since $$3 f_2=3 \lambda_i^2+3 \sum_{j \neq i}\lambda_j^2 \geq 3 \lambda_i^2+(\sum_{j \neq i}\lambda_j)^2=4 \lambda_{i}^{2},$$
then
$$c_i=\frac{2(3f_2-4\lambda_i^2)}{3(\lambda_1-\lambda_2)^2(\lambda_1-\lambda_3)^2(\lambda_2-\lambda_3)^2}\geq 0,~~i=1,2,3,4.$$
Since $H_3$ is constant, then $f_3$ is constant. By (\ref{integ}), we get
$$0=\int_{M^n}f_3\Big(\sum_ic_ih_{44,i}^2+1\Big)dM=f_3\int_{M^n}\Big(\sum_ic_ih_{44,i}^2+1\Big)dM.$$
Since
$$\Big(\sum_ic_ih_{44,i}^2+1\Big)>0,$$
Then $f_3=0$. By (\ref{niu}), we get $H_3=0$.
\end{proof}
Now we prove Theorem \ref{th1}. Since $g$ is constant, if $g\leq 3$, then $M^4$ is an isoparametric hypersurface by Proposition \ref{pro1}.
If $g=4$, then $H_3=0$ by Proposition \ref{pro2}, and $M^4$ is isoparametric hypersurface by Theorem \ref{wei} in \cite{dgw}. Thus we complete the proof of Main Theorem \ref{th1}.

{\bf Acknowledgements:}  Authors are supported by the
grant No. 12071028  of NSFC.

\end{document}